\documentclass[12pt]{amsart}

%
%

%
%

\usepackage[utf8]{inputenc}		
\usepackage[T1]{fontenc}			
\usepackage{hyperref}			

\usepackage{amsmath,amsfonts}	
\usepackage{amsthm,latexsym,amssymb}
\usepackage[mathscr]{euscript}	

\usepackage[all]{xy}




\newtheoremstyle{mytheoremstyle} 
    {0.6cm}                    
    {0.6cm}                    
    {\itshape}                   
    {}                           
    {\bfseries}                   
    {.}                          
    {.5em}                       
    {}  

\newtheoremstyle{mytheoremstarstyle} 
    {0.6cm}                    
    {0.6cm}                    
    {\itshape}                   
    {}                           
    {\bfseries}                   
    {.}                          
    {.5em}                       
    {\thmname{#1}\thmnote{ (#3)}}  
  
\newtheoremstyle{mydefinitionstyle} 
    {0.6cm}                    
    {0.6cm}                    
    {\normalfont}                   
    {}                           
    {\bfseries}                   
    {.}                          
    {.5em}                       
    {}  

\newtheoremstyle{mydefinitionstarstyle} 
    {0.6cm}                    
    {0.6cm}                    
    {\normalfont}                   
    {}                           
    {\bfseries}                   
    {.}                          
    {.5em}                       
    {\thmname{#1}\thmnote{ (#3)}}  
    
\newtheoremstyle{myremarkstyle} 
    {0.6cm}                    
    {0.6cm}                    
    {\normalfont}                   
    {}                           
    {\bfseries}            	       
    {.}                          
    {.5em}                       
    {}  

\newtheoremstyle{myremarkstarstyle} 
    {0.6cm}                    
    {0.6cm}                    
    {\normalfont}                   
    {}                           
    {\bfseries}            	       
    {.}                          
    {.5em}                       
    {\thmname{#1}\thmnote{ (#3)}}  

\theoremstyle{mytheoremstyle}
\newtheorem{theorem}{Theorem}

\newtheorem{lemma}[theorem]{Lemma}
\newtheorem{proposition}[theorem]{Proposition}

\theoremstyle{mytheoremstarstyle}  
\newtheorem{theorem*}{Theorem}
\newtheorem{corollary*}{Corollary}
\newtheorem{lemma*}{Lemma}
\newtheorem{proposition*}{Proposition}

\theoremstyle{mydefinitionstyle}
\theoremstyle{mydefinitionstarstyle}\newtheorem{definition*}{Definition}

\theoremstyle{myremarkstyle}\newtheorem{remark}[theorem]{Remark}
\theoremstyle{myremarkstarstyle}\newtheorem{remark*}{Remark}\newtheorem{example*}{Example}


\def\im{\mathrm{\rm im}}

\def\bbZ{\mathbb{Z}}\def\bbR{\mathbb{R}}\def\bbN{\mathbb{N}}

\def\const{\mathrm{const}}

\def\colim{\mathop{\rm colim}}


\def\CW{\mathbf{CW}}
\def\Set{\mathbf{Set}}

\def\scrC{\mathscr{C}}

\usepackage{MnSymbol}
\usepackage{tikz,tikz-3dplot}

\def\Manifolds{\mathbf{Manifolds}}
\DeclareMathOperator{\Map}{Map}

\DeclareMathOperator{\SymMonCat}{\mathrm{\mathbf{SymMonCat}}}
\DeclareMathOperator{\ManCat}{\mathrm{\mathbf{Manifolds}}}

\begin{document}

\title{A Bicategory Approach to Differential Cohomology}
\author{Markus Upmeier}
\date{\today}
\address{Mathematisches Institut\\
Georg-August Universität G\"ottingen\\
Bunsenstraße 3-5\\
D-37073 G\"ottingen}
\email{upmeier@uni-math.gwdg.de}
\maketitle

\begin{abstract}
A bicategory approach to differential cohomology is presented. Based on the axioms of Bunke-Schick, a symmetric monoidal groupoid is associated to differential refinements of cohomology theories. It is proven that such differential refinements are unique up to equivalence of the corresponding symmetric monoidal groupoids and the existing uniqueness results for rationally-even theories are interpreted in this framework.

Moreover we show how the bicategory formalism may be used to give a simple construction of a differential refinement for any generalized cohomology theory, based on a refinement of the Chern character to a strict transformation of bicategories.
\end{abstract}

\section{Introduction}

Differential cohomology combines ideas from gauge theory (connections, curvature) and stable homotopy theory. For example, Chern-Weil theory combines gauge theory and ordinary cohomology and early interest in differential cohomology \cite{cheeger_simons} arose from the fact that the Chern-Weil homomorphism lifts to (ordinary) differential cohomology. In many cases this lift carries more information than the Chern-Weil form and the characteristic class together, the classical example being that the holonomy of a flat bundle may be non-zero, even though the Euler form vanishes.

In a similar way, differential $K$-theory makes a natural appearance in the context of index theory. The classical Atiyah-Singer Index Theorem for families asserts the equality of the analytic index of a family of Dirac operators with a cohomological quantity, involving the Chern character of the symbol and an $\hat{A}$-class. In the presence of `additional geometry' one may pick canonical Chern-Weil representative differential forms. The differential Index Theorem \cite{bunke_schick_k}, \cite{lott} asserts that actually more is true: one may lift both sides of the equation (the diff.~analytic and diff.~topological index) to differential cohomology and these refined indices coincide.

Generalized differential cohomology theories were constructed in \cite{hopkins_singer} using simplicial homotopy theory and applied to the cohomology theory
called `Anderson dual of the sphere' to define a certain invariant. Other constructions of differential extensions for specific cohomology theories (K-theory, Landweber-exact cohomology theories) had also been known, raising the question of uniqueness for differential refinements. This question was partially answered by \cite{bunke_schick_uniqueness} in the
case of rationally even cohomology theories.

The purpose of this article is twofold: First, to illustrate that uniqueness of differential refinements in the general case holds as well, but in a weaker sense: a differential refinement is viewed as a functor to monoidal groupoids which are proven to be unique up to monoidal equivalence. Secondly, we give a simple construction of generalized differential cohomology theories in terms of monoidal categories. Technically, this is based on a refinement of the Chern character transformation in generalized cohomology to a monoidal functor.\\

The paper is organized as follows: in the remaining part of this section, we shall recall the axiomatic definition of differential cohomology theories. The mentioned uniqueness result is proven in section 2. In section 3 we discuss the refinement of the Chern character transformation, referring the more technical points to the appendix (section 5). This refinement is used in section 4 to construct differential cohomology theories.\\

For the convenience of the reader, we recall here the axiomatic definition of a differential extension $(\hat{E}^*, R, I, a)$ of a generalized cohomology theory $E^*$ in the sense of \cite{bunke_schick_uniqueness}:
Let $V:=E^*(pt)\otimes \bbR$ (a graded vector space) and let $\mathbf{Manifolds}$ denote the category of smooth manifolds.
A \emph{differential extension} $\hat{E}$ of $E$ is a family of contravariant functors $$\hat{E}^*: \mathbf{Manifolds}\longrightarrow \mathbf{Abel}\hspace{1cm} (*\in \bbZ)$$
into the category of Abelian groups together with natural transformations
\begin{align*}
\Omega^{*-1}(-;V)/im(d) & \overset{a}\Longrightarrow \hat{E}^*,\\
\hat{E}^* & \overset{R}\Longrightarrow \Omega^*(-;V),\\
\hat{E}^*	& \overset{I}\Longrightarrow E^*(-),
\end{align*}
such that the following diagram commutes for any manifold $M$ and has an exact upper horizontal line
$$\xymatrix{
E^{*-1}(M)\ar[r]^-{ch}	&	\Omega^{*-1}(M;V)/im(d)\ar[r]^-a\ar[rd]_d	&	\hat{E}^*(M)\ar[r]^-I\ar[d]_R	&	E^*(M)\ar[r]\ar[d]^{ch}	&	0\\
&&\Omega_{cl}^*(M;V)\ar[r]	&	H_{sing}^*(M;V)
}$$
Here we have used differential forms and singular cohomology with coefficients in a graded vector space. Of course, an \emph{isomorphism of differential cohomology theories} is defined to be a natural transformation which is compatible with the maps $a,R,I$.\\

\thanks{\textbf{Acknowledgements.} I would like to thank Ralf Meyer for many fruitful discussions out of which this paper grew. Also, I would like to thank my advisor Thomas Schick for his constant support during my PhD research.}

\section{Uniqueness of Differential Extensions}
Given a differential refinement $(\hat{E}^*, R, I, a)$ of a cohomology theory, the transformation $a: \Omega^{*-1}(M;V)_d\rightarrow \hat{E}^*(M)$ may be used to define a groupoid
\[
	\tilde{E}^*(M)=\left\{\begin{tabular}{p{27ex}l}
	\textbf{objects:} 										&$x\in \hat{E}^*(M),$\\[2ex]
	\textbf{morphisms} from\newline $x\in \hat{E}^*(M)$ to $y\in \hat{E}^*(M)$:	& $\eta \in \Omega^{*-1}(M;V)_d$ with $a(\eta) = y-x$.
	\end{tabular}\right.
\]
Here, $\Omega^{*-1}(M;V)_d$ denotes the quotient of $\Omega^{*-1}(M;V)$ by $\im(d)$.
Using addition, $\tilde{E}^*(M)$ becomes a strict symmetric monoidal groupoid.
For varying $M$, these may be organized into a functor
\[
	\tilde{E}^*: \Manifolds \longrightarrow \SymMonCat_\mathrm{str}.
\]
In a similar way, the maps $d: \Omega^{*-1}(M;V)_d \longrightarrow \Omega^*(M;V)$ and $\delta: C^{*-1}(M;V)_\delta \rightarrow C^*(M;V)$ may be used to construct functors
\[
	\tilde{\Omega}^*, \mathscr{C}^*_{sing}: \Manifolds \longrightarrow \SymMonCat_\mathrm{str}.
\]
The transformation $R$ along with the property $R\circ a = d$ amounts to a natural transformation
\[
	R: \tilde{E}^* \rightarrow \tilde{\Omega}^*.
\]

\begin{proposition}
Let $\hat{E}^*$ and $\hat{E}'^*$ be two differential extensions of $E^*$. For each manifold $M$, there exists an equivalence of the corresponding symmetric monoidal groupoids
\[
	\Phi_M: \tilde{E}^*(M)\longrightarrow	\tilde{E}'^*(M),
\]
which is natural in $M$.
\end{proposition}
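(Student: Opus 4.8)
The plan is to build $\Phi_{M}$ by hand from the Bunke--Schick exact sequence, exploiting that everything in that sequence except the groups $\hat{E}^{*}(M)$ and $\hat{E}'^{*}(M)$ themselves — the forms, the Chern character $ch$, and $E^{*}$ — is independent of the chosen differential extension. First I would extract three pieces of data. (i) Since $I$ and $I'$ are surjective, choose set-theoretic sections $s_{M}\colon E^{*}(M)\to\hat{E}^{*}(M)$ and $s'_{M}\colon E^{*}(M)\to\hat{E}'^{*}(M)$; these cannot be chosen naturally in $M$, and that is the whole difficulty. (ii) Exactness at $\Omega^{*-1}(M;V)_{d}$ gives $\ker a=\ker a'=\im\!\big(ch\colon E^{*-1}(M)\to\Omega^{*-1}(M;V)_{d}\big)$, since $ch$ belongs to the ambient data; hence $a$ and $a'$ induce \emph{natural} isomorphisms $\bar a\colon\Omega^{*-1}(M;V)_{d}/\im(ch)\xrightarrow{\ \sim\ }\ker I$ and $\bar a'$ onto $\ker I'$, so $\xi_{M}:=\bar a'_{M}\circ\bar a_{M}^{-1}\colon\ker I\xrightarrow{\ \sim\ }\ker I'$ is natural in $M$ and satisfies $\xi_{M}\circ a_{M}=a'_{M}$. (iii) Thus the ``flat parts'' $\ker I\cong\ker I'$ and the indeterminacy torsors $\ker a=\ker a'$ are canonically identified.

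I would then set $\Phi_{M}(x):=s'_{M}(Ix)+\xi_{M}\big(x-s_{M}(Ix)\big)$ on objects and $\Phi_{M}([\eta]):=[\eta]$ on morphisms. This is well defined on morphisms: if $a_{M}[\eta]=y-x$ then $y-x\in\ker I$ and $a'_{M}[\eta]=\xi_{M}(a_{M}[\eta])=\xi_{M}(y-x)=\Phi_{M}(y)-\Phi_{M}(x)$. From $I'\circ\Phi_{M}=I$ and the exact sequence one reads off that $\Phi_{M}$ is fully faithful — for fixed objects the two $\Hom$-sets are either both empty or both torsors under $\im(ch)$, and $[\eta]\mapsto[\eta]$ matches them — and essentially surjective: given $x'\in\hat{E}'^{*}(M)$, choose $x$ with $Ix=I'x'$; then $x'-\Phi_{M}(x)\in\ker I'=\im(a')$ yields an isomorphism $\Phi_{M}(x)\xrightarrow{\ \sim\ }x'$. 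As $s'_{M}$ is not additive, $\Phi_{M}$ is a strong (not strict) symmetric monoidal functor, the unit constraint $\Phi_{M}(0)\cong 0$ and the multiplicativity constraint $\Phi_{M}(x+y)\cong\Phi_{M}(x)+\Phi_{M}(y)$ being witnessed by elements of $\ker I'=\im(a')$ (they have the correct image under $I'$). Hence each $\Phi_{M}$ is an equivalence of symmetric monoidal groupoids. (Equivalently: $\tilde{E}^{*}(M)$ is the Picard groupoid of the two-term complex $\Omega^{*-1}(M;V)_{d}\xrightarrow{a}\hat{E}^{*}(M)$, whose homology is $E^{*}(M)$ in degree $0$ and $\im(ch)(M)$ in degree $1$, both extension-independent; so the groupoids are abstractly equivalent, and the explicit $\Phi_{M}$ is needed only for the next step.)

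For naturality I would work in the bicategorical sense appropriate to the setting, i.e.\ with pseudonatural transformations — this is forced, since in general there is no natural transformation $\hat{E}^{*}\to\hat{E}'^{*}$ of the cohomology theories inducing an isomorphism on $E^{*}$, which is precisely why uniqueness holds only up to equivalence of groupoids. For $f\colon M\to N$ the two functors $\Phi_{M}\circ\tilde{E}^{*}(f)$ and $\tilde{E}'^{*}(f)\circ\Phi_{N}$ from $\tilde{E}^{*}(N)$ to $\tilde{E}'^{*}(M)$ do not agree on the nose, but their difference on an object $x$, namely $f^{*}(\Phi_{N}x)-\Phi_{M}(f^{*}x)$, lies in $\ker I'_{M}=\im(a'_{M})$ and depends on $x$ only through $I_{N}(x)$; lifting it along $a'_{M}$ produces a morphism in $\tilde{E}'^{*}(M)$, and because composition in these groupoids is addition (all groups abelian) these morphisms assemble into an invertible monoidal natural transformation $\alpha_{f}\colon\Phi_{M}\circ\tilde{E}^{*}(f)\Rightarrow\tilde{E}'^{*}(f)\circ\Phi_{N}$. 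One then checks the coherence axioms making $(\Phi_{M},\alpha_{f})$ a pseudonatural transformation, together with compatibility with the monoidal structures; each such diagram collapses to an identity of elements of an abelian group, so this is routine. The conclusion is a pseudonatural transformation $\Phi\colon\tilde{E}^{*}\Rightarrow\tilde{E}'^{*}$ which is objectwise an equivalence, hence an equivalence in the bicategory of functors $\Manifolds\to\SymMonCat$ — that is, an equivalence $\Phi_{M}$ natural in $M$ in the required sense.

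The one genuine obstacle is the absence of natural sections of $I$; everything else (extension-independence of $\ker a$ and of the flat part, full faithfulness, essential surjectivity) comes straight out of the Bunke--Schick diagram. The only points that demand actual computation are the verification that the $2$-cells $\alpha_{f}$ are well defined — which works because the relevant element of $\ker I'_{M}$ depends only on $I_{N}(x)$, so it can be chosen to vary naturally with the object — and that they satisfy the pentagon and unit coherences; both are formal.
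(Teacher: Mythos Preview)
Your construction of $\Phi_M$ via arbitrary set-theoretic sections $s_M,s'_M$ does yield a symmetric monoidal equivalence for each fixed $M$, and your remark that $\tilde{E}^*(M)$ is a Picard groupoid with $\pi_0\cong E^*(M)$ and $\pi_1\cong\im(ch)$, both extension-independent, is a clean explanation of why such an equivalence must exist.

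The gap is in naturality. You assert that sections of $I$ cannot be chosen naturally in $M$ and that one is therefore \emph{forced} into pseudonaturality; this is precisely where your argument diverges from the paper's and loses strength. The paper obtains a \emph{strictly} natural $\Phi_M$ by a representability device: approximate the spectrum by manifolds $\mathcal{E}_n^{(i)}$ (Bunke--Schick, Prop.~2.3), fix once and for all compatible universal classes $\hat u_i\in\hat E^n(\mathcal{E}_n^{(i)})$ and $\hat u'_i\in\hat E'^n(\mathcal{E}_n^{(i)})$, and for $\hat v\in\hat E^n(M)$ factor $I(\hat v)$ through some $f_i\colon M\to\mathcal{E}_n^{(i)}$, write $\hat v=f_i^*\hat u_i+a(\alpha)$, and set $\Phi_M(\hat v)=f_i^*\hat u'_i+a'(\alpha)$. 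Since the choices live on the universal objects and the outcome is independent of the factorization, naturality in $M$ is automatic. So your claim that ``in general there is no natural transformation $\hat E^*\to\hat E'^*$'' is false for set-valued functors; what fails is only additivity, and that failure is absorbed into the monoidal constraint exactly as you propose. In short, the paper's section of $I$ is $[f_i]\mapsto f_i^*\hat u_i$, and this \emph{is} natural---the insight you are missing is that one can build such a section from classifying objects.

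A secondary issue: your $2$-cells $\alpha_f$ (and likewise the monoidal constraints) are canonically determined only in $\Omega^{*-1}_d/\im(ch)$, since $\ker a'=\im(ch)$; but morphisms in $\tilde{E}'^*(M)$ are elements of $\Omega^{*-1}_d$ itself. Lifting so that the pseudonaturality cocycle and the pentagon hold on the nose---not merely modulo $\im(ch)$---is not as ``routine'' as you suggest; one must either exhibit explicit coherent lifts or argue that the obstruction vanishes. The paper's strict naturality eliminates the pseudonaturality coherence entirely, leaving only the monoidal coherence for $B$.
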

\begin{proof}
Represent $E^*$ by an $\Omega$-spectrum $(E_n, \varepsilon_n)$. Fix a degree $n$. By Proposition 2.3.~in \cite{bunke_schick_uniqueness} there exists a sequence of pointed manifolds $\mathcal{E}_n^{(i)}$, $i\in \bbN$, together with pointed embeddings of submanifolds $\kappa_i: \mathcal{E}_n^{(i)}\hookrightarrow \mathcal{E}_n^{(i+1)}$
as well as $(i-1)$-connected pointed maps $x_i: \mathcal{E}_n^{(i)}\rightarrow E_n$ such that $x_{i+1}\circ\kappa_i=x_i$ and such that for any finite dimensional CW-Complex $X\in \CW^+$ the canonical map
\[
	\colim_{i} [X, \mathcal{E}_n^{(i)}] \rightarrow [X,E_n]=E^n(X)
\]
is an isomorphism. Also, one may choose $\hat{u}_i\in \hat{E}^n(\mathcal{E}_n^{(i)})$ with
\begin{align*}
	\kappa_i^* \hat{u}_{i+1} &= \hat{u}_i,	& I(\hat{u}_i)&=[x_i],	&\mathrm{Rham}(R(\hat{u}_i))=ch[x_i].
\end{align*}
Similarly, one may choose $\hat{u}'_i\in \hat{E}'^n(\mathcal{E}_n^{(i)})$. We are now ready to define $\Phi_M$ on objects: For $\hat{v}\in \hat{E}^n(M)$ write $I(\hat{v})=[M\xrightarrow{f} E_n]$ for some $f$ and factor
$f\simeq x_i\circ f_i$ up to homotopy for some $f_i: M\rightarrow \mathcal{E}_n^{(i)}$. Then
\[
	\hat{v}-f_i^*\hat{u}_i=a(\alpha)\text{ for some } \alpha\in \Omega^{n-1}(M;V)_d
\]
and we define
\[
	\Phi_M(\hat{v}):=f_i^*(\hat{u}'_i)+a'(\alpha).
\]
As is shown in \cite{bunke_schick_uniqueness}, $\Phi_M$ is well-defined, compatible with $I,R,a$ and natural in $M$.
We use the identity on $\Omega^{n-1}(M;V)_d$ to view $\Phi_M$ as a functor
\[
	\Phi_M: \tilde{E}^n(M) \rightarrow \tilde{E}'^n(M),
\]
which is clearly full and faithful. It is also essentially surjective: For $\hat{u}'\in \hat{E}'^*(M)$ choose
$\hat{u}\in \hat{E}^*(M)$ with $I(\hat{u})=I'(\hat{u}')$. Then $\Phi_M(\hat{u})-\hat{u}'=a'(\alpha)$
for some $\alpha\in \Omega^{n-1}(M;V)_d$ and then $\hat{u}-a(\alpha)$ maps to $\hat{u}'$.\\

Thus $\Phi_M$ is an equivalence of categories. It will therefore suffice to show that $\Phi_M$ is a symmetric monoidal functor. The deviation of $\Phi_M$ from being additive is a natural transformation
\[
	(\Phi_M\circ +)-(+'\circ(\Phi_M\times \Phi_M)): \hat{E}^n(M)\times \hat{E}^n(M) \rightarrow \hat{E}'^n(M)
\]
that composes with $I$ to zero, so that it factors over a natural transformation
\[
	B: \hat{E}^n(M)\times \hat{E}^n(M)\rightarrow \Omega^{n-1}(M;V)_d/\im(ch).
\]
satisfying
\begin{align*}
	B(\hat{u},\hat{v}+\hat{w})+B(\hat{v},\hat{w})&=B(\hat{u},\hat{v})+B(\hat{u}+\hat{v},\hat{w}),\\
	B(\hat{u},\hat{v})=B(\hat{v},\hat{u}),\\
	B(\hat{u},0)=0=B(0,\hat{u}).
\end{align*}
Together with the identity arrows $0\xrightarrow{0} \Phi_M(0)=0$ the transformation $B$ gives $\Phi_M$ the structure of a symmetric monoidal functor. The naturality of $B$ -- not in $M$ but in the objects of $\hat{E}^n(M)$ follows from the observation that
\[
	\Phi_M(\hat{v}+a(\beta))=\Phi_M(\hat{v})+a(\beta).
\]
\end{proof}

\begin{remark}\label{bunke_schick_interpretation}
Under additional assumptions (rationally even, finiteness conditions on the coefficients)
the results of \cite{bunke_schick_uniqueness} amount to the assertion that one may even pick a \emph{strict} monoidal
equivalence.
\end{remark}

\section{The Refined Chern Character}

Let $E_n$ be an $\Omega$-spectrum with structure maps $\varepsilon_n: \Sigma E_n\rightarrow E_{n+1}$ which are adjoint to homeomorphisms.\\

Then the fundamental groupoid $\Map(X,E_n)=\Pi_1 E_n^X$ has a monoidal structure: there are two canonical addition maps
\[ E_n\times E_n\approx \Omega^2 E_{n-2} \times \Omega^2 E_{n-2} \longrightarrow \Omega^2 E_{n-2}\approx E_n\]
given by ``horizontal'' and ``vertical'' concatenation
which equip $\Map(X,E_n)$ with two weak monoidal structures $\ominus, \overt$. The associators and unitors are given by the familiar homotopies. The identity object is given in both cases by the constant map $I: X\rightarrow E_n$ onto the basepoint. We have an \emph{equality}
\[  (f_1\ominus g_1)\overt (f_2\ominus g_2)= (f_1\overt f_2)\ominus (g_1\overt g_2)  \]
which together with the unitors yield a natural isomorphism $\ominus \Rightarrow \overt$ (set $g_1=f_2=I$).
Also, by an Eckmann-Hilton type argument we obtain a braid $\gamma_{f,g}: f\ominus g\Rightarrow g\ominus f$ that turns $(\Map(X,E_n),\ominus,I, \gamma)$ into a symmetric monoidal category:
\begin{align*}
	f \ominus g &\cong (I\overt f)\ominus(g\overt I) = (I\ominus g) \overt (f\ominus I) \cong g\overt f\\
	&\cong (g\ominus I)\overt(I\ominus f)=(g\overt I)\ominus(I\overt f)\cong g\ominus f.
\end{align*}

The deRham homomorphisms may be interpreted as a strict symmetric monoidal equivalence of categories
\[  \rho_X: \tilde{\Omega}^n(M) \overset{\sim}{\longrightarrow} \mathscr{C}^n_{sing}(M)  \]
which are the components of a natural transformation $\tilde{\Omega}^n\rightarrow \mathscr{C}^n_{sing}$. Every component is an equivalence and we would like to invert $\rho$. To pick inverse equivalences systematically we use bicategories:\\

With monoidal transformations as $2$-arrows, $\SymMonCat$ forms a strict 2-category. We will view $\Manifolds$ as a 2-category with only identity $2$-arrows.
Then $\rho$ is a strict transformation between the two strict functors $\tilde{\Omega}^n, \mathscr{C}^n_{sing}$ of bicategories.

\begin{lemma}
Suppose $\mathscr{C}$ is a category, considered trivially as a bicategory.
Let
\[F,G: \mathscr{C}\longrightarrow \SymMonCat\]
be strict functors of bicategories
and $u: F\Rightarrow G$ a strict transformation which has the property that all of the functors $u_X: FX\rightarrow GX$ are equivalences. Then there exists a weak transformation $v: G\Rightarrow F$ such
that $uv$ and $vu$ are the identity functors up to modifications.
\end{lemma}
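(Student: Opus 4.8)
The plan is to produce $v$ by choosing an adjoint quasi-inverse of each $u_X$ and defining the naturality $2$-cells of $v$ as the \emph{mates}, along these adjunctions, of the (identity) naturality squares of $u$; the weak-transformation axioms and the two required modifications then come out as formal consequences of the triangle identities.

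First, the objectwise data. For each object $X$ the symmetric monoidal functor $u_X\colon FX\to GX$ is an equivalence of underlying categories, hence can be completed to an adjoint equivalence: choose a functor $v_X\colon GX\to FX$ together with natural isomorphisms $\eta_X\colon\id_{FX}\Rightarrow v_Xu_X$ and $\varepsilon_X\colon u_Xv_X\Rightarrow\id_{GX}$ satisfying the two triangle identities. Since $u_X$ is symmetric monoidal and an equivalence, $v_X$ acquires a canonical symmetric monoidal structure making $\eta_X$ and $\varepsilon_X$ monoidal natural isomorphisms; thus $(u_X\dashv v_X,\eta_X,\varepsilon_X)$ is an adjoint equivalence inside the $2$-category $\SymMonCat$. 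Note also that strictness of $F$, $G$ and of $u$ yields the \emph{equalities} $u_Y\circ Ff=Gf\circ u_X$ for every $f\colon X\to Y$, which will be used in place of the usual naturality $2$-cell of $u$.

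Next, define $v$ on a morphism $f\colon X\to Y$ by
\[
 v_f:=\bigl(v_Y\,Gf\,\varepsilon_X\bigr)\circ\bigl(\eta_Y\,Ff\,v_X\bigr)\colon\quad Ff\circ v_X\ \overset{\sim}{\Longrightarrow}\ v_Y\circ Gf,
\]
i.e. the mate of the identity $u_Y\circ Ff=Gf\circ u_X$. Being a vertical composite of whiskered monoidal natural isomorphisms, $v_f$ is an invertible $2$-cell of $\SymMonCat$, and a triangle identity for $(u_X,v_X)$ gives $v_{\id_X}=\id_{v_X}$. Because $\scrC$ has only identity $2$-cells and $F$, $G$ are strict, the sole remaining coherence condition for $v\colon G\Rightarrow F$ to be a weak transformation is the composition law $v_{g\circ f}=(v_g\,Gf)\circ(Fg\,v_f)$ for $X\xrightarrow{f}Y\xrightarrow{g}Z$. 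One checks this by sliding $\eta_Z$ past $Fg\cdot v_f$ using the interchange law and then applying a triangle identity for $(u_Y,v_Y)$ to collapse the resulting $(\varepsilon_Y\,Gf)\circ(u_Y\,v_f)$ to $Gf\cdot\varepsilon_X$; equivalently, this is the functoriality of the mate calculus, and only that much of the computation need be indicated.

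Finally, the modifications. The family $(\varepsilon_X)_X$ is a modification $uv\Rightarrow\id_G$ and the family $(\eta_X)_X$ a modification $\id_F\Rightarrow vu$ (both $uv$ and $vu$ have naturality $2$-cells obtained by whiskering the $v_f$, with no further data since $u$ is strict). The modification axiom for $(\varepsilon_X)_X$ is precisely the identity $(\varepsilon_Y\,Gf)\circ(u_Y\,v_f)=Gf\cdot\varepsilon_X$ already used, and that for $(\eta_X)_X$ is dual; both follow from interchange together with one triangle identity. As all components are isomorphisms, $uv$ and $vu$ are the identity transformations up to invertible modifications, which is what is wanted. The one genuine calculation in the argument is the composition law for $v$ (equivalently, functoriality of the mate correspondence); everything else is either transport of the monoidal structure along an equivalence or a one-line consequence of the triangle identities, so that is where the main obstacle—and the bulk of the bookkeeping—lies.
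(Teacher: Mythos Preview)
Your proof is correct and follows essentially the same route as the paper: choose objectwise adjoint equivalences, transport the symmetric monoidal structure to $v_X$ along them, and define the naturality $2$-cells $v_f$ as mates of the (identity) naturality squares of $u$, with the coherence conditions and the modifications $\eta,\varepsilon$ reduced to the triangle identities. The paper presents the mate as a pasting diagram and leaves the coherence checks to the reader, while you name the mate calculus and spell out the modification axioms, but the content is the same.
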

\begin{proof} The monoidal functor
$u_X$ comes equipped with a natural transformation $\mu_X: \otimes(u_X\times u_X)\Rightarrow u_X\circ \otimes$.
For any $X\in\mathscr{C}$ pick $v_X: GX\rightarrow FX$, $\varepsilon_X: 1\Rightarrow u_X\circ v_X$, and $\eta_X: v_X\circ u_X\Rightarrow 1$ such that $(u_X,v_X, \varepsilon_X, \eta_X)$ form an adjoint equivalence. Using the unit $\eta_X$ and counit $\varepsilon_X$ of the adjunction we may make $v_X$ symmetric weak monoidal: define the inverse of $\nu_X$ as
\[  v_X\circ \otimes \overset{v(\varepsilon\otimes\varepsilon)}{\Longrightarrow} v_X\circ \otimes\circ (u_Xv_X\times u_Xv_X)
\overset{v \mu (v\times v)}{\Longrightarrow} v_X\circ u_X \circ \otimes\circ(v_X\times v_X)
\overset{\eta(v\times v)}{\Longrightarrow} \otimes\circ(v_X\times v_X)
\]
The unit $\eta_X$ and counit $\varepsilon_X$ then become monoidal transformations.
It remains to check that the monoidal functors $(v_X,\mu_X)$ depend functorially on $X$ up
to coherent 2-cells $v_f: (Ff,\phi)\circ (v_X,\nu_X) \Rightarrow (v_Y,\nu_Y)\circ (Gf,\gamma)$ for $f:X\rightarrow Y$ in $\mathscr{C}$.
The monoidal transformation $v_f$ is defined as the 2-cell
\[\xymatrix{
GX\ar[rr]^1\ar@/_/[rd]_--{v_X}	&   \ar@{=>}[d]^{\varepsilon_X}	&	GX \ar[rr]^{Gf}	&&	GY\ar@{=>}[d]^{\eta_Y}\ar@/^/[rd]^{v_Y}&\\
& FX\ar@/_/[ru]_--{u_X}\ar[rr]_{Ff} && FY\ar@/^/[ru]^----{u_Y}\ar[rr]_{1} && FY
}\]
Note that the middle diagram commutes.
The coherence conditions follow from the zig-zag equations for $\varepsilon_X, \eta_X$ and
are left to the reader.
\end{proof}

\noindent
We may thus find a \emph{weak} transformation, unique up to modifications,
\[
	\tau: \mathscr{C}^n_{sing}\Longrightarrow \tilde{\Omega}^n.
\]

\begin{proposition}
It is possible to lift the Chern character transformation $ch: [M,E_n] \rightarrow H^n(M;V)$
to a weak monoidal functor
\[
	\Map(M,E_n) \rightarrow \scrC^n_{sing}(M),
\]
natural in $M$, i.e. to a strict transformation $\Map(-,E_n)\rightarrow \mathscr{C}^n_{sing}$.
\end{proposition}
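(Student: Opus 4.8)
The plan is to construct the lift concretely: pull back a fixed cocycle representing the universal Chern character class along each map $f: M\to E_n$, and use the singular prism operator to obtain the comparison cochains attached to homotopies. Naturality of $ch$ produces a universal class $ch_n := ch(\id_{E_n})\in H^n(E_n;V)$ with $ch(f)=f^*ch_n$ for every $f$. Fix once and for all a normalized singular cocycle $z_n\in Z^n(E_n;V)$ representing $ch_n$; since the basepoint of $E_n$ represents $0\in E^n(pt)$, the class $ch_n$ vanishes there and $z_n$ may be taken to vanish on the basepoint. Because $ch$ is induced by a morphism of (rationalized) spectra, the universal class is primitive for the $H$-space addition $\mu: E_n\times E_n\to E_n$ underlying $\ominus$, i.e.\ $\mu^*ch_n=\pr_1^*ch_n+\pr_2^*ch_n$; fix in addition a cochain $w\in C^{n-1}(E_n\times E_n;V)$ with $\delta w=\mu^*z_n-\pr_1^*z_n-\pr_2^*z_n$.

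On objects I would set $u_M(f):=f^*z_n$, a cocycle representing $ch(f)$. A morphism $f\to g$ in $\Map(M,E_n)$ is the homotopy class rel endpoints of a homotopy $H: M\times[0,1]\to E_n$; writing $P: C^{*}(M\times[0,1];V)\to C^{*-1}(M;V)$ for the natural prism operator, with $\delta P+P\delta=i_1^*-i_0^*$ for the endpoint inclusions $i_0,i_1: M\to M\times[0,1]$, I would set $u_M([H]):=P(H^*z_n)\bmod\im(\delta)$. Since $z_n$ is closed, $\delta P(H^*z_n)=g^*z_n-f^*z_n$, so this is a morphism $u_M(f)\to u_M(g)$ in $\scrC^n_{sing}(M)$; passing to isomorphism classes recovers $[f]\mapsto f^*ch_n=ch(f)$, which is the asserted lifting.

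Strict naturality in $M$ is immediate: for $\psi: M'\to M$ both composites $\psi^*\circ u_M$ and $u_{M'}\circ\psi^*$ send $f$ to $(f\psi)^*z_n$ and $[H]$ to $P\big((H\circ(\psi\times\id))^*z_n\big)$, by naturality of pullback and of $P$. Hence $u$ is a strict transformation of the strict bicategory functors $\Map(-,E_n),\scrC^n_{sing}: \Manifolds\to\SymMonCat$. For the symmetric monoidal structure of $u_M$: since $z_n$ vanishes on the basepoint and $P$ annihilates cochains pulled back along $M\times[0,1]\to M$ (here normalization of $z_n$ enters), the unit $I$ has $u_M(I)=0$ and constant homotopies map to $0$, so the unit constraint is the identity; I would take the multiplicativity constraint $u_M(f)+u_M(g)\to u_M(f\ominus g)$ to be $(f,g)^*w\bmod\im(\delta)$, which is legitimate because $\delta\big((f,g)^*w\big)=(f\ominus g)^*z_n-f^*z_n-g^*z_n$.

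The hard part is everything that remains: that $u_M$ is well defined and functorial on morphisms (independence of the representing homotopy, concatenation mapping to the sum), and that $\big(u_M,(f,g)^*w\big)$ satisfies the associativity, unit and braiding coherences against the Eckmann--Hilton data on $\Map(M,E_n)$. The difficulty is that $2$-cells in $\scrC^n_{sing}(M)$ are cochains modulo coboundaries, a group in which $H^{n-1}(M;V)$ appears as the automorphism group of an object and which is strictly finer than cohomology; hence these identities cannot be settled on cohomology classes alone, and one must exhibit the correcting cochains explicitly. They arise from iterated prism operators applied to homotopies $M\times[0,1]\times[0,1]\to E_n$ and to the associator and braiding homotopies on the $E_n^{\times k}$, whose boundary formulas yield precisely the required relations once the prism operators and the primitive $w$ have been chosen compatibly; carrying this out is the technical content I would defer to the appendix.
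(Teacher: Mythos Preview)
Your outline is in the right spirit---represent the universal class by a reduced cocycle, use the prism operator on homotopies, and pick a primitive $w$ for the $H$-space addition to serve as the monoidal comparison---but there is a genuine gap at the coherence step. Once $z_n$ is fixed, the primitive $w$ with $\delta w=\mu^*z_n-\pr_1^*z_n-\pr_2^*z_n$ is determined only up to a cocycle in $Z^{n-1}(E_n\times E_n;V)$, and for a generic choice the associativity hexagon fails: the discrepancy
\[
(\mu\times 1)^*w+\pr_{12}^*w-(1\times\mu)^*w-\pr_{23}^*w+P(\alpha^*z_n)
\]
(with $\alpha$ the universal associator homotopy on $E_n^{\times 3}$) is a \emph{cocycle} in $C^{n-1}(E_n^{\times 3};V)$ which has no a~priori reason to be a coboundary; the same issue arises for the braid. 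You acknowledge that $w$ and the prism operators must be ``chosen compatibly'', but give no mechanism for doing so, and that mechanism is exactly the content of the proposition.

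The paper's key move, which your proposal lacks, is not to choose $w$ independently at all but to derive everything from the $\Omega$-spectrum structure. One fixes a \emph{compatible family} of reduced fundamental cocycles $\iota_k\in Z^k(E_k,\ast;V)$ across all degrees, subject to $\int_{\Delta^1}\varepsilon_{k-1}^*\iota_k=\iota_{k-1}$; this replaces your single $z_n$. Under the identification $E_n\approx\Omega E_{n-1}$, objects of $\Map(M,E_n)$ become $1$-simplices in $E_{n-1}$, the tensor $f\ominus g$ is the outer face of a canonical horn-filler $\sigma(f,g)\colon M\times\Delta^2\to E_{n-1}$, and the monoidal comparison is $\int_{\Delta^2}\sigma(f,g)^*\iota_{n-1}$. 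The associator and unitors are then realised as faces of canonical $3$-simplices in $E_{n-1}$, and Stokes' theorem over $\Delta^3$ \emph{produces} the coboundaries witnessing the pentagon and triangle identities. In short, the compatibility across degrees together with the simplicial organisation of loop concatenation is what makes the coherences automatic; working entirely inside $E_n$ with an arbitrary $w$, as you do, leaves an obstruction you have not addressed.
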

\begin{proof}
By Yoneda's Lemma, the Chern character is determined by `fundamental' cohomology classes $[\iota_n]\in H^n(E_n; V)$. Pick a compatible family $\iota_n\in Z^n_{sing}(E_n,pt;V)$ of `fundamental' cocycles implementing the Chern character, where `compatibility' means that $\int_I \varepsilon_{n-1}^* \iota_n=\iota_{n-1}$ for every $n\in \mathbb{Z}$. A proof that one may choose such compatible families may be found in \cite{mein_erster_artikel}, section 2.1. The required functor is defined on objects by $f\mapsto f^*\iota_n$ and on morphisms by $[H]\mapsto \int_I H^*\iota_n$. The latter is welldefined since the morphisms in $\mathscr{C}^n_{sing}$ are $\im(\delta)$-cosets. Different choices of fundamental cocycles yield modifications of this transformation. The proof that this functor is monoidal can be carried out `by hand', by explicitly writing down coboundaries that verify the coherence conditions. This is sketched in the appendix.
\end{proof}

\vspace{1ex}

Composing this transformation with the weak inverse $\tau$ of the deRham transformation, we obtain a weak transformation
\[
	ch: \Map(-,E_n)\Longrightarrow \tilde{\Omega}^n.
\]
More explicitly, we are given monoidal functors
\[  ch_M: \Map(M,E_n)\longrightarrow \tilde{\Omega}^n(M),  \]
i.e.~a functor $ch_M$ together with a natural transformation, denoted
$ch_M^+: ch_M\circ \ominus \rightarrow +\circ (ch_M\times ch_M)$, and a
map $ch_M(\const) \rightarrow 0$ which may be chosen to be zero (this uses that the fundamental cocycles are reduced, i.e.~vanish on the basepoint).
Also, for every smooth map $f:M\rightarrow N$ we have a map
\[  ch_f: E_n^N\longrightarrow \Omega^{n-1}(M;V)_d  \]
with 
\begin{align}
d(ch_f(c))&=f^*ch_N(c)-ch_M(c\circ f)\hspace{1.34cm}\text{ for all }c: N\rightarrow E_n\label{eqn:1}\\
f^*ch_N(H)+ch_f(H_0)&=ch_f(H_1)+ch_M(H\circ(f\times id_I))\text{ for a homotopy }H:H_0\simeq H_1\label{eqn:2}
\end{align}
where $H: N\times I\rightarrow E_n$. The coherence conditions for $M\xrightarrow{f}N\xrightarrow{g}O\overset{c}{\longrightarrow} E_n$ amount to
\begin{align}
ch_{gf}(c)&=ch_f(c\circ g)  + f^* ch_g(c),\label{eqn:3}\\
ch_{id}&=0\label{eqn:4}.
\end{align}
The compatibility condition for the $\iota_n$ ensure that for a map $h: M\rightarrow E_{n-1}$ viewed as a homotopy $H: M\times I\rightarrow E_n$ from $\const$ to $\const$ we have
\[  ch_M(h)=ch_M(H) \in \Omega^{n-1}(M;V)_d.  \]

\section{Construction of Differential Refinements}

Consider the equivalence relation $\sim$ on the set $\mathscr{E}^n(M)=E_n^M\times \Omega^{n-1}(M;V)_d$ given by $(c_1,\omega_1)\sim (c_2,\omega_2)$ iff there exists a homotopy $H: c_1\rightarrow c_2$ with
\[  ch_M(H)=\omega_1-\omega_2  \]
Reflexivity, symmetry, and transitivity of $\sim$ follow from the following functional properties of $ch_M$: that it preserves the identity, inverses of isomorphisms, and composition. We
may now define
\[   \hat{E}^n(M)=\mathscr{E}^n(M)/\sim.  \]
For a smooth map $M\xrightarrow{f}N$ we let
\[ \hat{E}^n(f): \hat{E}^n(N)\longrightarrow \hat{E}^n(M),\;\; [c,\omega]\mapsto [c\circ f, f^*\omega+ch_f(c)]  \]

This is well-defined by (\ref{eqn:2}) and yields a functor $\hat{E}^n: \Manifolds \longrightarrow \Set$ by (\ref{eqn:3}) and (\ref{eqn:4}). The group structure is given by
\begin{align*}
	[c_1,\omega_1] + [c_2, \omega_2] &= [c_1\ominus c_2, \omega_1+\omega_2 + ch_M^+(c_1,c_2)],\\
	0&=[\const,0].
\end{align*}
Define natural transformations
\begin{align*}
\Omega^{n-1}(M;V)_d&\xrightarrow{a} \hat{E}^n(M),\hspace{1.5cm}  a(\omega)=[\const,\omega]\\
\hat{E}^n(M)&\xrightarrow{I} E^n(M),\hspace{1.5cm}  I[c,\omega]=[c]\in [M,E_n]=E^n(M)\\
\hat{E}^n(M)&\xrightarrow{R} \Omega^n(M;V)_{d=0},\hspace{0.5cm}  R[c,\omega]=ch(c)+d\omega
\end{align*}
The map $R$ is well-defined, again by functoriality of $ch_M$.

\begin{proposition}
$\hat{E}^n$ is a differential cohomology theory in the sense of \cite{bunke_schick_uniqueness}. That is, we have a commutative diagram with an exact row:
\[\xymatrix{
E^{n-1}\ar[r]^-{ch}	&	\Omega^{n-1}(-;V)_d\ar[r]^-a\ar[rd]^d	&	\hat{E}^n\ar[r]^I\ar[d]^R	&	E^n\ar[r]\ar[d]^{ch} & 0\\	&	&	\Omega^{n}(-;V)_{d=0}\ar[r]	&	H^n_{dR}(-;V)
}\]
\end{proposition}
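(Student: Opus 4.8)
The proposition asks for three things: that $a,R,I$ underlie morphisms of abelian-group-valued functors, that the triangle and the square in the displayed diagram commute, and that the top row is exact at $\Omega^{n-1}(-;V)_d$, at $\hat E^n$ and at $E^n$. The plan is to dispose of the homomorphism and naturality statements and the two commutativities by a direct unwinding of the definitions, using the functional identities \eqref{eqn:1}--\eqref{eqn:4} and the reducedness of the fundamental cocycles $\iota_n$, and then to concentrate all the work on exactness at $\Omega^{n-1}(-;V)_d$, which is the only step carrying real content.

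The routine items go as follows. Additivity of $a$ uses that the unitor for $\ominus$ and the structure cell $ch_M^+(\const,\const)$ have trivial Chern--Simons form, which follows from $ch_M(\const)=0$; additivity of $I$ uses that $\ominus$ and the group law on $[M,E_n]$ define the same operation on $\pi_0$ by the Eckmann--Hilton argument of Section 3; and additivity of $R$ uses that $ch_M^+(c_1,c_2)$ is, by construction, a morphism $ch_M(c_1\ominus c_2)\to ch_M(c_1)+ch_M(c_2)$ in $\tilde\Omega^n(M)$, i.e. $d\,ch_M^+(c_1,c_2)=ch_M(c_1)+ch_M(c_2)-ch_M(c_1\ominus c_2)$. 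Naturality of $a,R,I$ in $M$ is read off from \eqref{eqn:1}--\eqref{eqn:4} once one normalizes $ch_f(\const)=0$. For the triangle, $R(a(\omega))=ch_M(\const)+d\omega=d\omega$. For the square, $\mathrm{Rham}(R[c,\omega])=\mathrm{Rham}(ch_M(c)+d\omega)=\mathrm{Rham}(ch_M(c))=ch[c]$, the last equality holding because $ch_M(c)$ is, by construction of $ch_M$ from the $\iota_n$ via the weak inverse $\tau$ of the de Rham transformation, a closed form representing the Chern character class of $c$; and $ch(I[c,\omega])=ch[c]$ as well.

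For exactness, the vertex at $E^n$ is immediate since $I[c,0]=[c]$, and at $\hat E^n$ one has $I\circ a=0$ (as $I[\const,\omega]=0$) while, if $I[c,\omega]=0$, then $c\simeq\const$ and any homotopy $H\colon c\to\const$ gives $[c,\omega]=[\const,\omega-ch_M(H)]=a(\omega-ch_M(H))$, so $\ker I=\im a$. The substantial point is exactness at $\Omega^{n-1}(-;V)_d$, i.e. $\ker a=\im ch$. Here $a(\omega)=[\const,\omega]$ vanishes exactly when there is a homotopy $H\colon\const\to\const$ in $E_n^M$ with $ch_M(H)=\omega$ in $\Omega^{n-1}(M;V)_d$. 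By the exponential law a path from $\const$ to $\const$ in $E_n^M$ is the same as a map $M\to\Omega E_n$, and since $\varepsilon_{n-1}$ is adjoint to a homeomorphism $E_{n-1}\approx\Omega E_n$, the path-homotopy classes of such self-homotopies of $\const$ correspond bijectively to $[M,E_{n-1}]=E^{n-1}(M)$; as $ch_M$ is a functor on the fundamental groupoid, $ch_M(H)$ depends only on this class, and by the last remark of Section 3 it equals $ch_M(h)$ for the associated $h\colon M\to E_{n-1}$. Since $d\,ch_M(H)=ch_M(\const)-ch_M(\const)=0$, the form $ch_M(H)$ is closed, and its de Rham class is $ch[h]$; as the image of $ch\colon E^{n-1}(M)\to\Omega^{n-1}(M;V)_d$ consists precisely of the closed forms modulo $\im d$ representing Chern character classes, it follows that $\omega=ch_M(H)\in\im ch$. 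Conversely, if $\omega=ch[h]\in\im ch$, then taking $H$ to be the self-homotopy of $\const$ determined by $h$ gives $ch_M(H)=\omega$, hence $a(\omega)=[\const,0]=0$. Thus $\ker a=\im ch$.

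The hard part is exactly this last step: one needs the cocycle-theoretic invariant $ch_M$ on self-homotopies of $\const$ to agree with the genuine Chern character $ch$ on $E^{n-1}$ not just up to cohomology but literally as elements of $\Omega^{n-1}(M;V)_d$, which is why both the closedness of $ch_M(H)$ and the degree-lowering compatibility from the end of Section 3 must be invoked. A subordinate but genuinely necessary technical point --- to be settled once and for all when the $\iota_n$ and $\tau$ are chosen --- is that every piece of structure data attached to the basepoint, namely $ch_M(\const)$, $ch_M^+(\const,\cdot)$ and $ch_f(\const)$, vanishes strictly, so that $a$ is a strictly additive and strictly natural transformation rather than one defined only up to coherent isomorphism.
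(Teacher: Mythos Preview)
Your proof is correct and follows the same approach as the paper's own proof, only with considerably more detail: the paper dispatches commutativity and the additivity/naturality claims in one sentence, and for exactness argues exactly as you do, using a homotopy $H$ to exhibit $\ker I\subset\im a$ and then the compatibility $ch_M(h)=ch_M(H)$ from the end of Section~3 (the cocycle condition $\int_{\Delta^1}\varepsilon_{n-1}^*\iota_n=\iota_{n-1}$) for exactness at $\Omega^{n-1}(-;V)_d$. Your identification of this last step as the substantive one, and of the strict vanishing of $ch_M(\const)$, $ch_M^+(\const,\cdot)$, $ch_f(\const)$ as the technical prerequisite, matches the paper's emphasis precisely.
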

\begin{proof}
The commutativity is clear. Now, if $I[c,\omega]=0$ there exists a homotopy $H: c\rightarrow \const$ so that $[c,\omega]=[\const,\omega+ch_M(H)]=a(\omega+ch_M(H))$. Also $I\circ a=0$.
If $a(\omega)=[\const,0]$ then there exists $H:\const\rightarrow \const$ with $ch_M(H)=\omega$.
and by construction $ch_M(H)$ represents $ch(H)$. Conversely, a map $c: M\rightarrow E_{n-1}$ may be viewed as a homotopy $H: \const\simeq \const: M\rightarrow E_n$. Then
$a(ch(h))=[\const, ch(h)]=[\const,0]$, using $H$ and the compatibility of the $\iota_n$.
\end{proof}

\section{Appendix}

The goal of this section is to show that for a fixed $X\in \ManCat$ the lift of the Chern character
\[  ch: \Map(X,E_n) \longrightarrow \mathscr{C}^n_{sing}(X)  \]
is a monoidal functor. This fact depends on the compatibility condition on the fundamental cocycles $\int_{\Delta^1} \varepsilon_{n-1}^*\iota_n = \iota_{n-1}$ which were used in defining the lift $ch$.
Recall that, using $E_n\approx \Omega E_{n-1}$, the category $\Map(X,E_n)$ is isomorphic to the category $\Map(X\times (I,\partial I), (E_{n-1},*))$ which carries an obvious weak monoidal structure.

The main technical step in the proof is to replace $\Map(X,E_n)$ with an isomorphic weak monoidal category $\mathscr{E}$ in which the associators and unitors are defined in such a way that the coherence conditions on $ch$ are readily `witnessed' by certain canonical coboundaries (remember that a morphism in $\mathscr{C}^n_{sing}(X)$ is defined up to addition of coboundaries). We will make reference to the face and degeneracy maps $d_i, s_j$ of the simplicial set $E_{n-1}^{X\times \Delta^\bullet}$).\\

Objects are $Ob(\mathscr{E})=(E_{n-1},*)^{X\times (\Delta^1,\partial \Delta^1)}$, typical
elements of which are denoted by $f,g,h$. Morphisms $f\rightarrow g$ are homotopy classes
of maps $H: X\times \Delta^2\longrightarrow E_{n-1}$ with $d_2 H=f, d_1 H=g, d_0 H=*$. Here, a homotopy between arrows $H_0$ and $H_1$ from $f$ to $g$ is defined to be a map
$G:X\times \Delta^3 \longrightarrow E_{n-1}$ with $d_0 G=H_1, d_1 G=H_0, d_2 G=s_1g, d_3 G=s_1 f$.

\begin{center}
\begin{tikzpicture}
\draw[thick] (0,0) -- (2,0) node[midway]{\tiny\textbf{>}} node[midway, below] {$f$};
\draw[thick] (0,0) -- (1,1.41) node[midway,sloped]{\tiny\textbf{>}} node[pos=0.4, above]{$g\;$};
\draw[thick] (1,1.41) -- (2,0) node[midway,sloped]{\tiny\textbf{<}} node[pos=0.6, above]{$\;\;*$};
\draw (1,0.6) node{$H$};
\end{tikzpicture}
\end{center}

Just as for simplicial homotopy groups, composition is given by picking arbitrary horn fillers. The identity is $id_f=[s_1 f]$.\\

The monoidal structure is given by the fact that there are \emph{canonical} horn fillers coming from canonical retractions $r_i: \Delta^n\rightarrow \Lambda^n_i,\; n\geq 2$ defined by assigning to $x\in \Delta^n$ the unique intersection point of $\Lambda^n_i$ with the one-dimensional affine subspace stationed at $x$ in direction of the orthogonal complement of $d_i(\Delta^{n-1})$. We will only need $n=2,3$:\\

\begin{minipage}[b]{0.5\linewidth}
\flushright

\begin{tikzpicture}[scale=1.25]
\draw [help lines, white] (0,-1) grid (2,1.5);
\draw [thick,->] (0,0) --  (0.5,0.71);
\draw [thick] (0.5,0.71) -- (1,1.42);
\draw [thick,->] (1,1.42)  -- (1.5,0.71);
\draw [thick] (1.5, 0.71) -- (2,0);
\draw [thick,->,dotted] (0,0) -- (1,0);
\draw [thick,dotted] (1,0) -- (2,0);
\draw [->, ultra thick] (1, 0.1) -- (1, 1.2);
\draw [->, ultra thick] (0.6, 0.1) -- (0.6, 0.6);
\draw [->, ultra thick] (1.4, 0.1) -- (1.4, 0.6);
\node at (1,-0.3) {$\Delta^2\rightarrow \Lambda^2_1$};
\end{tikzpicture}
\end{minipage}
\hspace{1cm}
\begin{minipage}[b]{0.5\linewidth}

\tdplotsetmaincoords{60}{30} 
\begin{tikzpicture}[tdplot_main_coords,scale=0.5] 

\draw[->,thick] (0,0,0) -- (2.5,0,0);\draw[thick] (2.5,0,0) -- (5,0,0);
\draw[->, dashed,thick] (0,0,0) -- (1.25, 2.165, 0);\draw[dashed,thick] (1.25, 2.165, 0) -- (2.5, 4.33, 0);

\draw[->,thick] (0,0,0) -- (1.25,  0.72, 2.04);\draw[thick] (1.25,  0.72, 2.04) -- (2.5, 1.443, 4.083);
\draw[->,thick] (5,0,0) -- (3.33,  0.96, 2.72);\draw[thick](3.33,  0.96, 2.72)--(2.5, 1.443, 4.083);

\draw[->,thick] (2.5, 4.33, 0) -- (2.5, 2.89, 2.04);\draw[thick] (2.5, 2.89, 2.04)--(2.5, 1.443, 4.083);

\draw[->,thick] (5,0,0) -- (3.75, 2.16,0);\draw[thick] (3.75,2.16,0) -- (2.5,4.33,0);

\draw[->,ultra thick] (2.5, 1.5,0) -- (2.5,1.5,3);
\draw[->,ultra thick] (1.5, 1,0) -- (1.5,1,2);
\draw[->,ultra thick] (3.2, 1,0) -- (3.2,1,2);
\draw[->,ultra thick] (2.8, 3,0) -- (2.8,3,1.2);

\node at (,0,-2) {$\Delta^3\rightarrow \Lambda^3_3$};
\end{tikzpicture}

\end{minipage}
\vspace{0.1cm}\\
Now, given two objects $f,g \in \mathscr{E}$ we may glue them together to
$X\times \Lambda^2_1 \rightarrow E_{n-1}$ and use $id_X\times r_2$ to canonically
extend to $\sigma(f,g): X\times \Delta^2\rightarrow E_{n-1}$. Then set
\[ f\oplus g= d_1 \sigma(f,g) \]
\begin{center}
\begin{tikzpicture}
\draw[thick] (0,0) -- (2,0) node[midway]{\tiny\textbf{>}} node[midway, below] {$f\oplus g$};
\draw[thick] (0,0) -- (1,1.41) node[midway,sloped]{\tiny\textbf{>}} node[pos=0.4, above]{$f\;$};
\draw[thick] (1,1.41) -- (2,0) node[midway,sloped]{\tiny\textbf{>}} node[pos=0.6, above]{$\;\;g$};
\draw (1,0.4) node{$\sigma(f,g)$};
\end{tikzpicture}
\end{center}

The tensor product of morphisms may be defined similarly by canonical extensions.
Given $H_0: f_0 \rightarrow g_0$ and $H_1: f_1\rightarrow g_1$ define $H_0\oplus H_1: f_0\oplus f_1 \rightarrow g_0\oplus g_1$ as follows:

\begin{center}
\tdplotsetmaincoords{60}{27} 
\begin{tikzpicture}[tdplot_main_coords,scale=4]


\filldraw[fill=black!10!white, draw=none]
(0,0,0) -- (0,1,0) -- (0.5,0,1.41);



\filldraw[fill=black!50!white, opacity=0.3, draw=none]
(1,1,0) -- (0.5,1,1.31) -- (0.5,0,1.41);


\draw[thick] (0,0,0) -- (1,0,0)	 node[midway,sloped] {\tiny{\textbf{>}}}	node[midway, below] {$f_0$} node[anchor=north west] {\textbf{0}};
\draw[thick] (1,0,0) -- (1,1,0)	 node[midway,sloped] {\tiny{\textbf{>}}}	node[midway, below] {$*$}  node[anchor=north west] {\textbf{1}};
\draw[dashed] (0,1,0) node[anchor=south west]{$\;$\textbf{2}} -- (0,0,0)	 node[midway,sloped] {\tiny{\textbf{>}}} node[midway, above] {$*$} node[anchor=north east] {$\;$\textbf{3}};
\draw[dashed] (0,1,0) -- (1,1,0)	 node[midway,sloped] {\tiny{\textbf{>}}} node[midway, above] {$g_0$};

\draw[thick] (0,0,0) -- (0.5,0,1.41)	 node[midway,sloped] {\tiny{\textbf{>}}} node[midway, sloped, above] {$f_0\oplus f_1$}  node[anchor=south east] {\textbf{4}};
\draw[thick] (1,0,0) -- (0.5,0,1.41)	 node[pos=0.3,sloped] {\tiny{\textbf{<}}} node[pos=0.3, sloped, above] {$f_1$};
\draw [dashed] (0,1,0) -- (0.5,1,1.31)	 node[pos=0.65,sloped] {\tiny{\textbf{>}}} node[pos=0.7, above, sloped] {$g_0\oplus g_1$} node[anchor=south west] {\textbf{5}};
\draw[thick] (1,1,0) -- (0.5,1,1.31)	 node[midway,sloped] {\tiny{\textbf{<}}} node[midway, above,sloped] {$g_1$};

\draw[thick] (0.5,0,1.41) -- (0.5,1,1.31)	 node[midway,sloped] {\tiny{\textbf{>}}} node[midway,sloped,above] {$*$};

\draw [dashed] (0,1,0) -- (1,0,0) node[pos=0.28, sloped] {\tiny\textbf{>}} node[pos=0.3,sloped,below] {$f_0$};
\draw [dashed] (0,1,0)--(0.5,0,1.41) node[midway,sloped] {\tiny\textbf{<}} node[pos=0.3,sloped, below] {$f_0\oplus f_1$};
\draw [thick](1,1,0) -- (0.5,0,1.41) node[midway,sloped] {\tiny\textbf{<}} node[midway,sloped,above] {$f_1$};
\end{tikzpicture}\end{center}

Canonically fill the horn $\Lambda^3_0$ given by $(\mathbf{0},\mathbf{2},\mathbf{3})=s_0 f_0$, $(\mathbf{2},\mathbf{3},\mathbf{4})=s_0(f_0\oplus f_1)$, $(\mathbf{0},\mathbf{3},\mathbf{4})=\sigma(f_0,f_1)$ to obtain a 3-simplex $\tau=(\mathbf{0},\mathbf{2},\mathbf{3},\mathbf{4})$. Next, canonically fill the horn $\Lambda^3_1$ given by $(\mathbf{0},\mathbf{2},\mathbf{4})=d_0\tau$, $(\mathbf{0},\mathbf{1},\mathbf{2})=H_0$, $(\mathbf{0},\mathbf{1},\mathbf{4})=s_0f_1$ to 
a 3-simplex $\sigma=(\mathbf{0},\mathbf{1},\mathbf{2},\mathbf{4})$. Finally, fill $\Lambda^3_1$ given by
$(\mathbf{1},\mathbf{2},\mathbf{4})=d_1\sigma$, $(\mathbf{1},\mathbf{2},\mathbf{5})=\sigma(g_0,g_1)$, $(\mathbf{1},\mathbf{4},\mathbf{5})=H_1$ to a 3-simplex $\kappa$.
Then $H_0\oplus H_1=d_1\kappa$.\\

The identity object is $*=const: X\times (\Delta^1\times \partial \Delta^1)\rightarrow (E_{n-1},*)$.
The left unitor is $\lambda_f^{-1}=\sigma(f,*): f\rightarrow f\oplus *$. The right unitor is given as the side of an appropriately filled horn $\Lambda^3_1$ given by $(\mathbf{0},\mathbf{1},\mathbf{2})=\sigma(*,f)$, $(\mathbf{1},\mathbf{2},\mathbf{3})=s_1 f$, $(\mathbf{0},\mathbf{1},\mathbf{3})=s_0 f$ as in the next figure. Let $\tau=(\mathbf{0},\mathbf{1},\mathbf{2},\mathbf{3})$ denote the canonical filler. Then the right unitor is $\rho_f=d_1\tau: *\oplus f\rightarrow f$.

\begin{center}
\tdplotsetmaincoords{60}{5} 
\begin{tikzpicture}[tdplot_main_coords,scale=4]
\draw (0,0,0) node[anchor=north east]{\textbf 0} -- (1,0,0) node[anchor=north west]{\textbf 3} node[midway,sloped]{\tiny\textbf >}node[midway,below]{$f$};
\draw[dashed](1,0,0) -- (0.5,0.886,0) node[anchor=south east]{\textbf 2} node[midway,sloped]{\tiny\textbf >}node[midway,below]{$*$} -- (0,0,0) node[midway,sloped]{\tiny\textbf >} node[midway,below]{$*\oplus f$};
\draw (0,0,0) -- (0.5, 0.44, 0.9) node[anchor=south]{\textbf 1} node[midway,sloped]{\tiny\textbf >}node[midway,above]{$*$} -- (1,0,0) node[midway,sloped] {\tiny\textbf >} node[midway, above]{$\;\;f$};
\draw [dashed] (0.5,0.44,0.9) -- (0.5,0.886,0) node[midway,sloped]{\tiny\textbf >} node[midway,below] {$f\;\;$};
\end{tikzpicture}
\end{center}
The associator is defined as follows:

\begin{center}
\tdplotsetmaincoords{70}{25} 
\begin{tikzpicture}[tdplot_main_coords,scale=5]

\draw (0,0,1) node[anchor=east]{\textbf{0}} -- (1,0,1) node[anchor=north west]{\textbf{1}} node[midway,sloped]{\tiny\textbf >}node[midway,below]{$f$} -- (1,1,1) node[anchor=west]{\textbf{2}} node[midway,sloped]{\tiny\textbf <}node[midway,sloped,above]{$h\oplus (g\oplus f)\;\;$} -- (0,1,1) node[anchor=south east]{\textbf{3}} node[midway,sloped]{\tiny\textbf <}node[midway,above]{$h$} -- (0,0,1) node[midway,sloped]{\tiny\textbf <}node[midway,above]{$g\;\;\;\;$};
\draw (0,1,1) -- (1,0,1) node[pos=0.4,sloped]{\tiny\textbf >}node[pos=0.4, above,sloped]{$g\oplus f$};
\draw (0,0,1)--(0.5,0.5,0)node[anchor=north]{\textbf{4}} node[midway,sloped]{\tiny\textbf <}node[midway,sloped,below] {$h\oplus g$}--(1,0,1) node[midway,sloped]{\tiny\textbf >} node[pos=0.6,sloped,above] {$(h\oplus g)\oplus f$};
\draw[dashed] (0,1,1) -- (0.5,0.5,0) node[pos=0.7,sloped]{\tiny\textbf <} node[pos=0.6,sloped,below] {$h$};
\draw (0.5,0.5,0) -- (1,1,1) node[midway,sloped]{\tiny\textbf >} node[midway,sloped, below] {$*$};
\end{tikzpicture}
\end{center}

$(\mathbf{0},\mathbf{1},\mathbf{3})=\sigma(g,f)$, $(\mathbf{0},\mathbf{1},\mathbf{4})=\sigma(h\oplus g,f)$, $(\mathbf{0},\mathbf{3},\mathbf{4})=\sigma(h,g)$ may be filled to a 3-simplex $\tau=(\mathbf{0},\mathbf{1},\mathbf{3},\mathbf{4})$. Next, $(\mathbf{1},\mathbf{3},\mathbf{4})=d_2\tau$, $(\mathbf{1},\mathbf{2},\mathbf{3})=\sigma(h,g\oplus f)$, and $(\mathbf{2},\mathbf{3},\mathbf{4})=s_0 h$ may be filled to another 3-simplex $\kappa=(\mathbf{1},\mathbf{2},\mathbf{3},\mathbf{4})$. Then
\[  \alpha_{h,g,f} = d_2\kappa: (h\oplus g)\oplus f \longrightarrow h\oplus (g\oplus f) \]

Once we show that we have an isomorphism of categories $\Map(X\times (I,\partial I), (E_{n-1},*)) \cong \mathscr{E}$ that maps the tensor product functor, the unitors, and the associators onto each other it will be clear that $\mathscr{E}$ in fact is a monoidal category.
The isomorphism is given by the identity on objects and by assigning to an arrow
$H: f\rightarrow g$ in $\Map(X\times (I,\partial I), (E_{n-1},*))$ the arrow
\[  h: X\times \Delta^2\longrightarrow E_{n-1},\; (s,t)\longmapsto \begin{cases} H(s,t/s) & (s>0),\\  * & (s=0).\end{cases}  \]
where here we view $\Delta^2$ as $\{ (s,t) \in I^2\mid \frac{1-s}{2} \leq t\leq \frac{1+s}{2}  \}\subset I^2$. The functor in the other direction is given on morphisms by a ``vertical'' retraction $I^2\twoheadrightarrow \Delta^2$ mapping $I^2\setminus \Delta^2$ onto $\partial \Delta^2$.\\

The following are a comparison of the two right unitors whose homotopy classes are clearly mapped onto each other:
\begin{center}
\begin{tikzpicture}[scale=3]

\filldraw[fill=black!10!white, draw=none]
(0,0) -- (0.5,0) -- (0,1);

\draw (0,0) -- node[pos=0.25,below]{$*$}node[pos=0.75,below]{$f$} (1,0) -- (1,1) node[midway,right]{$*$} -- (0,1)node[midway,above]{$f$} -- (0,0) node[midway,left]{$*$};
\draw (0.5,-0.02) -- (0.5,0.02); \draw (0.5,0) -- (0,1);

\filldraw[fill=black!10!white, draw=none]
(2,0) -- (2.5,0) -- (2.5,0.35);

\draw (2.5,1) -- (2,0)node[midway,sloped]{>}node[midway,left]{$f$} -- (3,0) node[pos=0.48]{>}node[midway,below]{$*\oplus f$} -- (2.5,1) node[midway,sloped]{<} node[midway,right]{$*$};

\draw (2,0) -- (2.5,0.35) node[midway,sloped]{>}node[midway,above]{$*$} -- (3,0) node[midway,sloped]{>} node[midway,above]{$f$};
\draw (2.5,0.35) -- (2.5,1)node[midway,sloped]{>} node[midway,left]{$f$};

\end{tikzpicture}
\end{center}

Now, replacing the category $\Map(X,E_n)$ by $\mathscr{E}$, the functor $ch: \mathscr{E} \rightarrow \mathscr{C}^n_{sing}(X)$ takes $f: X\times \Delta^1\rightarrow E_{n-1}$ to $\int_{\Delta^1} f^*\iota_{n-1}$ and $H: f\rightarrow g$ to $\int_{\Delta^2} H^*\iota_{n-1}$ (this is a morphism from $ch(f)$ to $ch(g)$ on behalf of Stokes' formula and since the fundamental cocycles are reduced). Recall that the tensor product $f\oplus g$ in $\mathscr{E}$ was defined in terms of $\sigma(f,g): X\times \Delta^2 \rightarrow E_{n-1}$.
Then $\int_{\Delta^2} \sigma(f,g):  ch(f\oplus g) \rightarrow ch(f)+ch(g)$ is the transformation 
required by in the definition of a monoidal functor.

The associator in $\mathscr{E}$ was defiend as the boundary of a 3-simplex $\kappa$. The integral $\int_{\Delta^3}$ now `witnesses' the compatibility for the monoidal functor and the associator. The unitors are similarly defined by boundaries of higher simplices $\tau$ and their integrals witness the other coherence conditions.

\bibliography{references}
\nocite{*}

\bibliographystyle{amsalpha}

\end{document}